\renewcommand\ge{\geqslant}  
\renewcommand\le{\leqslant}  
\newcommand\be{\begin{eqnarray*}}
\newcommand\ee{\end{eqnarray*}}
\newtheorem{introtheorem}{Theorem}
\newtheorem{theorem}{Theorem}[section]
\newtheorem{conjecture}{Conjecture}[section]
\newtheorem{lemma}[theorem]{Lemma}
\newtheorem{definition}[theorem]{Definition}
\newtheorem{example}[theorem]{Example}
\newenvironment{proof}[1][Proof]{\trivlist\item[\hskip\labelsep{\textit{#1.}}]}{\hspace*{\fill}$\Box$\endtrivlist}
\newcommand\PP{\mathbb P}
\newcommand\C{\mathbb C}
\newcommand\Z{\mathbb Z}
\newcommand\N{\mathbb N}
\newcommand\R{\mathbb R}
\newcommand{\cC}{\mathcal C}
\newcommand\numequiv{\equiv_{\rm num}}
\newcommand\vol{{\rm vol}}
\newcommand\dvol{d_{\rm vol}}
\newcommand\bprim{b_{\rm prim}}
\newcommand\NS{{\rm NS}}
\newcommand{\equ}{\ensuremath{\,=\,}}
\newcommand{\deq}{\ensuremath{\stackrel{\textrm{def}}{=}}}
\DeclareMathOperator{\Nef}{Nef}
\DeclareMathOperator{\Eff}{Eff}
\DeclareMathOperator{\vdim}{vdim}
\begin{document}

\title{Bounded volume denominators and bounded negativity}
\author{\normalsize Th. Bauer, B. Harbourne, A. K\"uronya, M. Nickel}
\maketitle
\thispagestyle{empty}


\begin{abstract}
   In this paper we study the question of whether
   on smooth projective surfaces the denominators
   in the volumes of big line bundles are bounded.
   In particular we investigate how this condition
   is related to bounded negativity (i.e., the boundedness
   of self-intersections of irreducible curves).
   Our first result shows that
   boundedness of volume denominators is
   equivalent to \emph{primitive bounded negativity}, which
   in turn is implied by bounded negativity.
   We connect this result to the study of semi-effective
   orders of divisors:
   Our second result shows that negative
   classes exist that become effective only after
   taking an arbitrarily large
   multiple.
\end{abstract}


\section*{Introduction}

Boundedness conditions of various kinds have generated vivid
   interest in algebraic geometry. One such that has received recent attention
   is the Bounded Negativity Problem: for which smooth, projective surfaces $X$
   is there no lower bound for $C^2$
   for the set of reduced (or, if you prefer, reduced and irreducible)
   curves $C$ on $X$? The only examples of such an $X$ currently known
   are in positive characteristics, and even then they are very special.

Surfaces for which a bound exists are said to have \emph{bounded negativity}.
  The \emph{Bounded Negativity Conjecture} asserts that
   over the complex numbers
   every smooth projective surface should have bounded negativity
   (see \cite{Bauer-et-al:Negative-curves, 11authors, MT} for
   the history of the conjecture and for results towards the
   conjecture, and \cite{Cilib-et-al} for related boundedness questions).

Another boundedness property that has gotten attention is
the boundedness of the denominators that appear
in Zariski decompositions of pseudo-effective divisors on $X$.
This was shown in \cite{BPS} to be equivalent to bounded negativity.

   We now want to ask what may initially seem to be a rather different problem, which we refer to
   as the Bounded Negative Semi-effective Problem. To state the problem, we will
   say that a divisor $F$ is \emph{semi-effective}
   if $kF$ is numerically equivalent to an effective divisor for some $k\geq1$.
   The least such $k$ is called the \emph{semi-effective order} of $F$.
   The Bounded Negative Semi-effective Problem is now:
   what values $k$ arise as the semi-effective order of a divisor $F$ on some surface $X$
   such that $kF$ is numerically equivalent to a prime divisor of negative self-intersection?
   In particular, is there an upper bound on such semi-effective orders $k$ for a given $X$?
   If for some $X$ there is no bound, then, since we always would have $F^2\leq-1$, $X$ would not have bounded negativity.

   Of course, since we're working up to numerical equivalence and
   looking for large semi-effective orders, it makes sense to pick a divisor $F$
   whose class is primitive (so not $kG$ for some class $G$ and some integer $k\geq 2$).
   I.e., it does no harm to assume $F$ is primitive:
   if $F$ is not primitive to start with, then up to numerical equivalence $F=kG$ for some primitive $G$,
   so we might as well replace $F$ by $G$. Thus the Bounded Negative Semi-effective Problem
   is equivalent to the following:
   for which surfaces $X$ is there a bound on the semi-effective orders of
   primitive numerical equivalence classes whose ray contains the class of a prime divisor of
   negative self-intersection?

  Again, if there is a surface $X$ with numerically primitive classes $F_1, F_2,\dots,$ with $F_i^2<0$ and
  unbounded semi-effective orders $k_1,k_2,\dots,$ and such that $k_iF_i$ is numerically a prime divisor,
  then $X$ has unbounded negativity. But perhaps the set of values $F_i^2$ is bounded.
  To explore this we introduce the notion of \emph{bounded primitive negativity},
  and, motivated by the results of \cite{BPS}, we prove the following theorem, which relates it to
  a further natural boundedness condition: we ask
   whether the denominators that appear in the volumes
   $\vol(L)$ of big line bundles $L$ on $X$ are bounded, which we call \emph{boundedness of volume denominators}.

\begin{introtheorem}
   For a smooth projective surface $X$, the following conditions
   are equivalent:
   \begin{itemize}
   \item[\rm(i)]
      $X$ has bounded volume denominators, i.e., there exists an
      integer $\dvol(X)$ such that for every (integral) big line
      bundle $L$, the volume
      \be
         \vol_X(L)=\limsup_k\frac{h^0(X,kL)}{k^2/2}
      \ee
      is a rational number with denominator at most $\dvol(X)$.
   \item[\rm(ii)]
      $X$ has bounded primitive negativity, i.e., there exists an
      integer $\bprim(X)$ such that for every primitive
      class $F\in\NS(X)$, whose ray $\R^+\cdot F$ contains a reduced
      irreducible curve, one has
      \be
         F^2 \ge -\bprim(X)
      \ee
   \end{itemize}
\end{introtheorem}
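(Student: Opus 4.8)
The common thread between (i) and (ii) is the Zariski decomposition. For a big line bundle $L$, write $L=P_L+N_L$ with $P_L$ nef and $N_L=\sum_{i=1}^{n}a_iN_i$ the negative part: the $a_i$ are positive, the $N_i$ are distinct reduced irreducible curves with negative definite intersection matrix $M=(N_i\cdot N_j)$, and $P_L\cdot N_i=0$ for all $i$. Recall that $\vol_X(L)=P_L^2=L^2-N_L^2$, so since $L^2\in\Z$ the denominator of $\vol_X(L)$ equals that of $N_L^2$; note also that each $N_i$ is a reduced irreducible curve whose class spans a ray of the kind considered in (ii).

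\emph{Proof of} (ii)$\Rightarrow$(i). The equations $P_L\cdot N_i=0$ read $Ma=b$ with $b_i:=L\cdot N_i\in\Z$, so $N_L^2=a^\top Ma=b^\top M^{-1}b$, which shows the denominator of $\vol_X(L)$ divides $|\det M|$. One can do better: writing each curve class $[N_i]=k_iF_i$ with $F_i\in\NS(X)$ primitive and $k_i\ge1$, and putting $K=\operatorname{diag}(k_1,\dots,k_n)$, $\tilde M=(F_i\cdot F_j)$, $\tilde b=(L\cdot F_i)\in\Z^n$, one has $M=K\tilde MK$ and $b=K\tilde b$, hence $N_L^2=\tilde b^\top\tilde M^{-1}\tilde b$ — the multiplicities $k_i$ cancel — so the denominator of $\vol_X(L)$ in fact divides $|\det\tilde M|$. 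Now $\tilde M$ is an integral symmetric negative definite matrix; by (ii) its diagonal entries satisfy $|F_i^2|\le\bprim(X)$, negative definiteness of its $2\times2$ principal minors bounds $|F_i\cdot F_j|$ by $\bprim(X)$ as well, and nonsingularity of $\tilde M$ forces the $F_i$ to be linearly independent, so $n\le\rho(X)$, the Picard number. A Hadamard estimate then gives $|\det\tilde M|\le\rho(X)^{\rho(X)/2}\bprim(X)^{\rho(X)}$, which we may take as $\dvol(X)$.

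\emph{Proof of} (i)$\Rightarrow$(ii), contrapositively. Let $F\in\NS(X)$ be primitive with $F^2=-b<0$ and suppose its ray contains a reduced irreducible curve $C=kF$, $k\ge1$; we build a big line bundle whose volume has denominator at least $b/c_X^2$, where $c_X$ depends only on $X$. Fix an integral ample class $H$. Since $C$ is the only curve meeting $C$ negatively, the classes $H+sC$ stay ample for $0\le s<s_1:=(H\cdot C)/(-C^2)$, so for any integer $j>s_1$ the divisor $L=H+jC$ is big and its Zariski decomposition is $L=(H+s_1C)+(j-s_1)C$; hence $\vol_X(L)=(H+s_1C)^2=H^2+(H\cdot F)^2/b$. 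It remains to arrange $(H\cdot F)^2$ coprime to $b$ up to a bounded factor: the numbers $E\cdot F$ ($E\in\NS(X)$) form a subgroup $c_X(F)\Z\subseteq\Z$ with $c_X(F)$ dividing the constant $c_X=\exp\!\big(\NS(X)^\vee/\NS(X)\big)$, and combining a class $E_0$ with $E_0\cdot F=c_X(F)$ with a fixed ample class, a numerical-semigroup (Frobenius) argument yields integral ample $H$ with $H\cdot F$ equal to any sufficiently large multiple of $c_X(F)$. Taking $H\cdot F=c_X(F)\,p$ with $p$ a large prime, $p\nmid b$, makes the denominator of $\vol_X(L)$ equal to $b/\gcd(c_X(F)^2,b)\ge b/c_X^2$, which tends to infinity with $b$; thus unbounded primitive negativity forces unbounded volume denominators.

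The main obstacle is the identity $N_L^2=\tilde b^\top\tilde M^{-1}\tilde b$ in the first implication: a volume is the self-intersection of the orthogonality-constrained positive part, and as such it is blind to the multiplicities $k_i$ with which the negative curves occur, so only the \emph{primitive} Gram matrix $\tilde M$ enters. This is precisely why bounded \emph{primitive} negativity — which controls the entries of $\tilde M$ — suffices, even though full bounded negativity (which controls the typically larger $\det M$) is not assumed. In the reverse direction the delicate point is arithmetic rather than geometric: when $\NS(X)$ is not unimodular one cannot pin the denominator to $b$ on the nose, only up to the bounded factor $c_X^2$; this is harmless for the qualitative equivalence, and when $\NS(X)$ is unimodular (e.g.\ for rational surfaces) one even gets the sharper $\bprim(X)\le\dvol(X)$.
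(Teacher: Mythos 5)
Your proposal is correct and follows essentially the same route as the paper: (ii)$\Rightarrow$(i) by solving the orthogonality equations of the Zariski decomposition in terms of the primitive classes $F_i$ (so the multiplicities $k_i$ cancel) and bounding the Gram determinant of the $F_i$ via bounded primitive negativity, and (i)$\Rightarrow$(ii) contrapositively via the explicit Zariski decomposition of $H+jC$, with $\vol_X(L)=H^2-(H\cdot F)^2/F^2$ and primitivity of $F$ forcing $\gcd$-control through the discriminant of $\NS(X)$. The only differences are cosmetic: you use a Hadamard estimate and a self-contained lattice-exponent/Frobenius argument where the paper invokes its discriminant lemma together with the determinant bound and ample-class construction from the cited BPS paper.
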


In view of \cite{BPS} and Theorem 1, we have the following
   diagram of
   implications.
   $$
      \newcommand\pbox[1]{\fbox{\parbox{10em}{\strut\centering #1\strut}}}
      \newcommand\jo[1]{\noalign{\vskip#1\jot}}
      \begin{array}{ccc}
         \pbox{$X$ has bounded negativity} & \Longleftrightarrow & \pbox{$X$ has bounded Zariski denominators} \\ \jo2
         \Big\Downarrow & & \\ \jo2
         \pbox{$X$ has bounded primitive negativity} & \Longleftrightarrow & \pbox{$X$ has bounded volume denominators}
      \end{array}
   $$
   It is not clear whether the converse of the downward vertical implication
   is true. We do not even know if a prime divisor of negative self-intersection
   is ever numerically equivalent to $kF$ for a primitive class $F$ with $k>2$
   (examples with $k=2$ have long been known).
   In fact, dropping the negativity assumption still leaves an open question:
   is there a surface $X$ and an infinite sequence of primitive classes $F_i$ on $X$
   with semi-effective orders $k_1<k_2<\dots$?
   The article \cite{Cilib-et-al} shows the answer is yes, assuming the SHGH Conjecture
   (stated below).
   Although the divisors given in \cite{Cilib-et-al} are nef, in Example \ref{PellExample} we show they can be modified to
   give primitive classes $F_i$ with $F_i^2<0$ with the same semi-effective orders $k_1<k_2<\dots$
   (all under the assumption of the SHGH Conjecture).

   Thus it is expected that there are surfaces having primitive classes with unbounded semi-effective orders,
   even if one requires them to have negative self-intersection.
   However, no surfaces are known having an infinite sequence of primitive classes with both unbounded semi-effective orders
   and having effective multiples with negative definite intersection matrix.
   Our best result along these lines is the following theorem. It shows for each $N$ as large as you like that there is a surface
   $X$ depending on $N$ and an integer $k\geq N$ for which there is a divisor $F$ on $X$
   of semi-effective order $k$ such that $kF$ is
   numerically equivalent to an effective divisor $D$ having negative
   definite intersection matrix.
In these examples, $D$ is not irreducible
(and, in characteristic 0, not even reduced).

\begin{introtheorem}\label{introthm:divisors}
   Over any algebraically closed field $K$, there exists a sequence of
   smooth projective surfaces $X_n$ and primitive classes $F_n$ of
   semi-effective orders $k_n$ with
   $$\lim_{n\to\infty} k_n=\infty$$
   such that $k_nF_n$ is numerically equivalent to
   an effective divisor $D_n\subset X_n$ having a negative definite intersection matrix.
\end{introtheorem}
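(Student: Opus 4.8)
\textit{Plan.}
I would realize the $X_n$ as minimal elliptic surfaces carrying a torsion section of large prime order, and build $F_n$ from the difference of that section and the zero section, corrected by a multiple of the general fibre; the non-effectivity of the intermediate multiples $jF_n$ will be a purely ``denominator'' phenomenon coming from a reducible fibre, i.e. it will never be visible from intersection numbers alone.

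Concretely, fix a sequence of primes $k_n\to\infty$ with $k_n\neq\operatorname{char}K$, and let $\pi\colon X_n\to B_n$ be the Kodaira--N\'eron model over $\bar K$ of the universal elliptic curve with $\Gamma_1(k_n)$-structure (so $B_n=\overline{X_1(k_n)}$): a smooth projective, relatively minimal elliptic surface with zero section $O$ and a section $P$ of order exactly $k_n$ in the Mordell--Weil group. Since $k_n$ is prime, every reducible fibre is of type $I_{k_n}$ (the fibres over the width-$k_n$ cusps); $P$ is non-trivial modulo at least one of them, meeting a non-identity component $\Theta_t$ with $1\le t\le k_n-1$, so $\gcd(t,k_n)=1$. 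Label the components of that fibre $\mathfrak f_0$ as $\Theta_0,\dots,\Theta_{k_n-1}$ with $O$ meeting $\Theta_0$. I would then invoke the Shioda--Tate description of $\NS(X_n)$: since $P$ is torsion, $(P)-(O)$ lies in $T_{\mathbb Q}$, where $T$ is the trivial lattice, and expanding $(P)-(O)$ in the $\mathbb Q$-basis $\{(O),\mathfrak f\}\cup\{\Theta_i^{(v)}\}_{i\ge 1,\,v}$ of $T_{\mathbb Q}$ ($\mathfrak f$ the general fibre class) and reading coefficients off intersection numbers gives, in $\NS(X_n)\otimes\mathbb Q$,
\[
 (P)-(O)=m\,\mathfrak f+\sum_v\Gamma_v,\qquad m=(P\cdot O)+\chi(\mathcal O_{X_n})\in\mathbb Z,
\]
where $\Gamma_v$ is the (negative) fibral $\mathbb Q$-divisor on the fibre over $v$, supported on non-identity components, determined by $\Gamma_v\cdot\Theta_i^{(v)}=(P)\cdot\Theta_i^{(v)}$ for $i\ge 1$.

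Set $F_n:=[O]-[P]+m\,\mathfrak f\in\NS(X_n)$, which is integral as a difference of integral classes, and $D_n:=k_nF_n$. Then $F_n=-\sum_v\Gamma_v$ in $\NS(X_n)\otimes\mathbb Q$, so $D_n=-k_n\sum_v\Gamma_v$; at $\mathfrak f_0$ this equals $\sum_{i=1}^{k_n-1}\min(i,t)\bigl(k_n-\max(i,t)\bigr)\,\Theta_i$, and at the remaining $I_{k_n}$-fibres it is a similar combination of non-identity components with positive integer coefficients (the inverse of an $A$-type Cartan matrix has positive entries, with common denominator $k_n$). Hence $D_n$ is an effective divisor supported on non-identity components of reducible fibres, and its intersection matrix --- a block sum of $A_{k_n-1}$-lattices sitting in distinct fibres --- is negative definite (in particular $D_n$ is reducible and not reduced). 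Moreover $F_n\cdot\Theta_t=[O]\cdot\Theta_t-[P]\cdot\Theta_t+m\,\mathfrak f\cdot\Theta_t=0-1+0=-1$, so $F_n$ is primitive. Finally, the semi-effective order of $F_n$ is exactly $k_n$: it is $\le k_n$ since $k_nF_n=D_n$ is effective, and if some $E\ge 0$ were numerically equivalent to $jF_n$ with $1\le j\le k_n-1$, then $E\cdot\mathfrak f=jF_n\cdot\mathfrak f=0$ forces $E$ to be vertical, so $[E]$ is an \emph{integral} combination of $\mathfrak f$ and the non-identity fibral components; but in that expansion the coefficient of $\Theta_{k_n-1}\subset\mathfrak f_0$ in $jF_n=-j\sum_v\Gamma_v$ equals $jt/k_n\notin\mathbb Z$, since $\gcd(t,k_n)=1$ and $k_n\nmid j$ --- a contradiction. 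Taking $X_n,F_n,D_n$ as above then yields the theorem.

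The verifications above are essentially bookkeeping; the real content is the \emph{input}: the existence, in every characteristic, of minimal elliptic surfaces with torsion sections of arbitrarily large order that are non-trivial modulo some reducible fibre, together with enough control on the Shioda--Tate decomposition to ensure both that the coefficient of $\mathfrak f$ in $(P)-(O)$ is an integer (so $F_n$ is integral) and that $k_n$ clears all fibral denominators (so $D_n$ is an honest effective divisor with no general fibre in its support --- a stray $\mathfrak f$ would destroy negative definiteness). Restricting to prime $k_n$, so that every reducible fibre is of type $I_{k_n}$, is the device that makes this control transparent, and the condition $k_n\neq\operatorname{char}K$ is harmless since only $k_n\to\infty$ is needed.
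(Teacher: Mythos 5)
Your construction is correct (granting the standard inputs you cite), but it follows a genuinely different route from the paper. The paper proves the theorem by two elementary blow-up constructions over $\mathbb{P}^2$, one per characteristic: in characteristic $p$ it blows up the $q^2+q$ points of $\mathbb{P}^2(\mathbb{F}_q)$ other than a chosen one and takes $D$ to be the \emph{reduced} proper transform of the union of the $q^2$ lines through pairs of these points, which is $q$ times a primitive class of semi-effective order $q$ whose components are disjoint negative curves; in characteristic $0$ it blows up iterated infinitely near points on a smooth plane curve of degree $d$ and takes a non-reduced $D$ linearly equivalent to $dF$, with intersection matrix a block sum of $(C^2)$ and $A_{d-1}$-chains. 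Your single construction via the elliptic modular surface over $X_1(k_n)$ and its torsion section handles all characteristics uniformly, with Shioda--Tate doing the bookkeeping; restricting to prime level makes the fibral denominators exactly $k_n$ and yields the clean obstruction $jt/k_n\notin\mathbb{Z}$ for intermediate multiples, which neatly pins down the semi-effective order, and the primitivity via $F_n\cdot\Theta_t=-1$ is tidy. The trade-off is heavier input: fine moduli for $\Gamma_1(k_n)$, the $I_1/I_{k_n}$ fibre configuration of the universal family in characteristic prime to $k_n$, and the one assertion you leave unproved --- that $P$ meets a non-identity component of some fibre --- which does need a word (e.g. the narrow Mordell--Weil group of sections passing through all identity components is torsion-free, since the height pairing is positive definite on it, or the direct computation $(P)\equiv(O)$ forcing $(P\cdot O)=-\chi<0$). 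By contrast the paper's examples are entirely elementary, and in positive characteristic even give a reduced $D$. Amusingly, both arguments exhibit negative definiteness through $A$-type chains of rational curves.
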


\paragraph{Acknowledgements.} Part of this work was done while the first, second, and third authors enjoyed the hospitality of the Mathematisches Forschungsinstitut in Oberwolfach as
participants of the Mini-Workshops  `Arrangements of Subvarieties, and their Applications in Algebraic Geometry'  and
`PBW Structures in Representation Theory'. We would like to use this opportunity to thank the organizers of the latter event, Evgeny Feigin, Ghislain Fourier, and Martina Lanini, for the invitation, and the MFO for the traditionally
inspiring atmosphere and the excellent working environment.

The third author was partially supported by the LOEWE grant `Uniformized Structures in Algebra and Geometry'.


\section{Bounded primitivity}

We now formally state the concept of bounded primitivity for divisor classes on projective varieties.
In case that $\cC = N^1(X)_\Z$, this is the same as $X$ having bounded semi-effective orders.

\begin{definition}[Bounded primitivity]
Let $X$ be a projective variety, $\cC\subseteq N^1(X)_\Z$ an arbitrary subset. We say that $\cC$ \emph{satisfies bounded primitivity (BP)} if there exists $m_0=m_0(\cC)\in\N$ such that for every $\alpha\in\cC$ there exists $1\leq i\leq m_0$ such that $i\alpha$ is numerically equivalent to an effective divisor.
\end{definition}

Let us look at a concrete class of examples: very general blow-ups of the projective plane. As we will see, although little is known about bounded primitivity of negative curves or bounded negativity on blow-ups of $\PP^2$ (except when they are  weak Fano), the Segre--Harbourne--Gimigliano--Hirschowitz (SHGH) Conjecture (\cite{S,Ha,G,Hi} yields a very strong conjectural answer.

Let $f\colon X_r\to \PP^2$ be the blow-up of $\PP^2$ at $r$ \emph{very general} points with exceptional divisors $E_1,\cdots,E_r$, write $H$ for the pullback of the hyperplane class on $X_r$. Following tradition, if
\[
D \equ dH - \sum_{i=1}^{r} m_iE_i \equ : (d;m_1,\dots,m_r),
\]
then the \emph{virtual dimension} of $D$ is defined as
\[
\vdim(D) \deq \frac{(D\cdot (D-K_{X_r}))}{2} \equ \frac{d(d+3)}{2} - \sum_{i=1}^{r} \frac{m_i(m_i+1)}{2}\ .
\]
For simplicity we will assume that $m_1\geq m_2\geq \ldots\geq m_r>0$.

\begin{conjecture}[SHGH Conjecture]
With notation as above, assume that $r\geq 3$ and  $D=(d;m_1,\ldots,m_r)$ with $d>m_1+m_2+m_3$. Then
\[
\dim |D| \equ \max \{ -1, \vdim(D)\}\ .	
\]
\end{conjecture}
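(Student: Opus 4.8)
Before looking for a proof one should notice that the inequality ``$\ge$'' in the claimed formula comes for free: since $d\ge1$ the class $K_{X_r}-D$ meets the nef class $H$ negatively, so $h^2(X_r,D)=h^0(X_r,K_{X_r}-D)=0$, and Riemann--Roch gives $\dim|D|=\vdim(D)+h^1(X_r,D)\ge\max\{-1,\vdim(D)\}$ with no hypothesis at all. Hence the conjecture in the stated form is purely an \emph{upper} bound on $h^0(X_r,D)$---namely $h^1=0$ when $\vdim(D)\ge0$ and $h^0=0$ when $\vdim(D)\le-1$---and that is exactly the kind of statement semicontinuity makes accessible. The plan is the Ciliberto--Miranda degeneration: degenerate $\PP^2$ together with its $r$ very general points in a one-parameter family whose special fibre is the transverse union $X_0=Y_1\cup_R Y_2$ of two smooth rational surfaces glued along a smooth rational curve $R$ (a classical choice is $\PP^2$ degenerating to $\PP^2\cup\mathbb{F}_1$ along a line), with $s$ of the points specialising to general position on $Y_2$ and the other $r-s$ to general position on $Y_1$; by upper semicontinuity of $h^0$ it then suffices to bound $h^0$ of the line bundle $L_0$ on $X_0$ into which the class $D$ degenerates.

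The bound comes from the Mayer--Vietoris sequence $0\to H^0\bigl(Y_1,L_1(-R)\bigr)\to H^0(X_0,L_0)\to H^0(Y_2,L_2)$, which gives $h^0(X_0,L_0)\le h^0\bigl(Y_1,L_1(-R)\bigr)+h^0(Y_2,L_2)$; iterating the twist by $R$ on $Y_1$ replaces the kernel term by a sum of $h^0\bigl(Y_1,L_1(-tR)\bigr)$. After contracting $R$, or passing from a Hirzebruch surface to $\PP^2$, each summand is again a system of plane curves with assigned multiple points, but of strictly smaller degree $d$, which one feeds to the inductive hypothesis. The induction would be double---outer on $d$, inner on $r$---with base cases $d\le2$ (lines and conics, classical) and small $r$ (say $r\le9$, where del Pezzo / anticanonical geometry makes everything explicit). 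The hypotheses $m_1\ge\dots\ge m_r>0$ and $d>m_1+m_2+m_3$ place $D$ in \emph{standard form} with strict inequality; by the Harbourne--Hirschowitz reformulation this means $D$ carries no $(-1)$-curve in its base locus, and since a multiply fixed $(-1)$-curve is conjecturally the only source of speciality, this is precisely the range in which one hopes to prove $h^1=0$ outright. The split $s$ and the multiplicities imposed along $R$ must be chosen so that every sub-system arising in the recursion inherits this ``no $(-1)$-curve'' property (or is one of the cases already settled).

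The main obstacle---and the reason the SHGH Conjecture is still open---is closing this induction. The systems $H^0\bigl(Y_1,L_1(-tR)\bigr)$ carry \emph{non-generic} multiplicities along the gluing curve $R$, so they lie outside the class of ``very general multiple-point'' systems the induction was designed for; making sense of them pulls in linear systems on $\PP^1\times\PP^1$ or on Hirzebruch surfaces with collinearity constraints, and bounding their dimension uniformly is exactly what has resisted every attempt. A secondary, bookkeeping-level difficulty is the wall $d=m_1+m_2+m_3$: Cremona transformations---the Weyl-group action on $\NS(X_r)$---move classes in and out of standard form, so the degeneration has to be set up to respect this action and not quietly expel a class from the region where the conjecture is asserted. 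One should moreover expect to handle the quasi-homogeneous and homogeneous strata separately---already the case $m_1=\dots=m_r$ in this range packages Nagata-type non-existence statements, themselves unproven---and, for ranges where the two-component degeneration stalls, to fall back on other degenerations (toric/tropical limits to unions of planes, or the monomial / generic-initial-ideal estimates of Dumnicki and collaborators) to enlarge the stock of base cases on which the induction can rest.
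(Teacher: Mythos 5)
There is nothing in the paper to compare your argument against: the statement you were asked about is the SHGH Conjecture itself, which the paper records precisely as a conjecture, with no proof offered (it only notes that the conjecture has been verified for $r\le 9$ and that it implies the bounded-negativity-type statement for very general blow-ups). Your write-up is, by your own account, not a proof either. The one step you actually carry out --- the vanishing $h^2(X_r,D)=h^0(X_r,K_{X_r}-D)=0$ via $(K_{X_r}-D)\cdot H<0$, hence $\dim|D|\ge\max\{-1,\vdim(D)\}$ by Riemann--Roch --- is correct but standard, and it is not the content of the conjecture; everything lies in the opposite inequality.

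The genuine gap is the one you name yourself: after the Ciliberto--Miranda-style degeneration to $Y_1\cup_R Y_2$ and the Mayer--Vietoris bound $h^0(X_0,L_0)\le h^0(Y_1,L_1(-tR))+\cdots+h^0(Y_2,L_2)$, the kernel systems carry non-generic conditions along $R$, so they leave the class of linear systems covered by the inductive hypothesis, and no uniform bound for them in the range $d>m_1+m_2+m_3$ is known. Asserting that semicontinuity ``makes the upper bound accessible'' and then conceding that the induction cannot be closed is a description of why the problem is open, not a proof; the same goes for the fallback suggestions (toric degenerations, monomial ideal estimates), which enlarge the known cases but do not cover the general statement. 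So the correct assessment is: the statement remains unproved both in the paper and in your proposal, and your text should be read as a (reasonable, and essentially accurate) survey of the standard attack and its known obstruction, not as an argument establishing the conjecture.
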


The SHGH Conjecture has been verified for $r\leq 9$ points. It also is known that it implies the following conjecture \cite{Ha,CM}.

\begin{conjecture}[Negative curves on very general blow-ups of the plane]
Let $X_r$ be a blow-up of $\PP^2$ at $r\geq 10$ very general points, and let $C\subset X$ be a reduced
irreducible curve with negative self-intersection. Then $(C^2)=-1$. In particular, BNC holds on $X_r$ with the bound $b(X)=1$, and every negative curve on $X_r$ is primitive.
\end{conjecture}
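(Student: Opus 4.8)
The plan is to deduce this statement from the SHGH Conjecture, arguing by contradiction: suppose $C$ is reduced and irreducible with $C^2\le -2$, and derive a contradiction. Write $C = dH - \sum_{i=1}^r m_iE_i$. If $C$ is one of the $(-1)$-curves $E_i$ or $H-E_i-E_j$ (the strict transform of a line through two of the points), then $C^2=-1$, contrary to assumption; so $C$ is none of these, and intersecting $C$ with each $E_i$, with the nef class $H$, and with each $H-E_i-E_j$ gives $m_i = C\cdot E_i\ge 0$, $d = C\cdot H\ge 1$, and $d\ge m_i+m_j$ for all $i\ne j$. Relabeling the (very general) points, assume $m_1\ge\cdots\ge m_r$. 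Next I would record that $|C|$ is zero-dimensional: writing $|C| = |M| + \Phi$ with $\Phi$ the fixed part and $|M|$ free of fixed components, if $\dim|C|\ge 1$ then $M^2\ge 0$, so $C\cdot M = M^2 + \Phi\cdot M\ge 0$ and hence $C\cdot\Phi = C^2 - C\cdot M<0$, which forces the irreducible $C$ to be a component of $\Phi$; since $C$ is reduced this gives $\Phi\ge C$ and $M\le 0$, a contradiction. Thus $h^0(X_r,\mathcal{O}_{X_r}(C))=1$.

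The main step is to Cremona-reduce $C$. As long as $d<m_1+m_2+m_3$, replace $C$ by its image under the standard quadratic Cremona transformation based at the three points of largest multiplicity and re-sort. Each such transformation is, because the points are \emph{very general}, induced by an isomorphism of $X_r$ with another very general blow-up of $\PP^2$, so it preserves $C^2$, $C\cdot K_{X_r}$, and $h^0$; moreover $d$ drops strictly while staying $\ge 0$ (an effective class pairs nonnegatively with the nef class $H$), and — since $C$ is an irreducible curve with $C^2<-1$, so meets \emph{every} $(-1)$-curve nonnegatively, a property preserved by Cremona transformations, which permute the $(-1)$-curves — all multiplicities stay $\ge 0$. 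After finitely many steps one reaches a Cremona-reduced class $C' = (d';m_1',\dots,m_{r'}')$ with $m_1'\ge\cdots\ge m_{r'}'>0$ and $d'\ge m_1'+m_2'+m_3'$ (zero multiplicities discarded), still with $\dim|C'|=0$, $(C')^2 = C^2\le -2$, and $C'\cdot K_{X_{r'}} = C\cdot K_{X_r}$. Note $r'\le 3$ is impossible, since then $(C')^2\ge 0$; the remaining delicate point is the borderline case $d' = m_1'+m_2'+m_3'$, where the form of SHGH quoted above (with strict inequality) does not literally apply, and here I would invoke \cite{Ha,CM}.

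Once $C'$ satisfies $d'>m_1'+m_2'+m_3'$ with all multiplicities positive, apply the SHGH Conjecture: it yields $\max\{-1,\vdim(C')\} = \dim|C'| = 0$, hence $\vdim(C')=0$, i.e.\ $(C')^2 = C'\cdot K_{X_{r'}}$. On the other hand, adjunction for the reduced irreducible curve $C$ gives $C^2 + C\cdot K_{X_r} = 2p_a(C)-2\ge -2$, so $C\cdot K_{X_r}\ge -2-C^2\ge 0$ (using $C^2\le -2$), whence $\vdim(C') = \tfrac12\big(C^2 - C\cdot K_{X_r}\big)\le -1$, a contradiction. This proves $C^2 = -1$. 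The remaining assertions follow at once: every reduced irreducible curve on $X_r$ now has self-intersection $\ge -1$, so BNC holds with $b(X_r)=1$; and a class $F$ with $F^2=-1$ cannot be written $kF'$ with $k\ge 2$ (as $k^2\nmid 1$), so every negative curve is primitive.

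The step I expect to be the main obstacle is the Cremona reduction — in particular the borderline case $d' = m_1'+m_2'+m_3'$, which is precisely where the argument must lean on the full strength of \cite{Ha,CM} rather than on the bare statement of SHGH given above; once rigidity of $C$ and a clean reduction are in hand, the rest is formal.
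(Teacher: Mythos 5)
A point of framing first: the statement you are asked about is presented in the paper as a \emph{conjecture}. The paper gives no proof of it (none is currently possible, as it is open); it only records, with citations to \cite{Ha,CM}, that the SHGH Conjecture implies it. Your proposal is therefore correctly read as a conditional proof of the implication ``SHGH $\Rightarrow$ the statement,'' and it follows the standard route of exactly those references: rigidity of an irreducible negative curve ($\dim|C|=0$), nonnegativity of the $m_i$ and of $d-m_i-m_j$ obtained by intersecting $C$ with the exceptional curves and the lines through pairs of the points, Cremona reduction toward standard form, and finally the contradiction between $\vdim(C')=0$ (forced by SHGH and $\dim|C'|=0$) and the adjunction estimate $\vdim(C')\le C^2+1\le -1$. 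These steps are sound. The one assertion you state without argument --- that a quadratic transformation based at three of the points exhibits $X_r$ as a blow-up of another \emph{very general} set of points --- does need a word: it follows because the transformation induces a dominant (indeed birational) self-map of the configuration space $(\PP^2)^r$, so the complement of countably many proper closed subsets pulls back to a set of the same kind; with that in place the bookkeeping (degree strictly drops, stays nonnegative, multiplicities stay nonnegative since $C$ meets the new exceptional curves, which are irreducible curves distinct from $C$, nonnegatively) is fine.

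The substantive incompleteness is the one you flag yourself: the borderline case $d'=m_1'+m_2'+m_3'$. This is not a removable technicality. In that case the quadratic transformation fixes the class, so the reduction genuinely halts there, and the version of SHGH quoted in the paper (which requires the strict inequality $d>m_1+m_2+m_3$) says nothing about such classes; one must either use an equivalent reformulation of SHGH valid for classes in non-strict standard form or carry out the finer analysis in \cite{Ha,CM}. Deferring that case to those references puts your write-up on essentially the same footing as the paper, which cites them for the entire implication, but you should be aware that this is precisely where the remaining content lives, so as a self-contained argument yours has a hole at that point. The concluding deductions are fine: once every reduced irreducible negative curve has $C^2=-1$, BNC holds on $X_r$ with $b(X)=1$, and $C\numequiv kF$ with $k\ge 2$ is impossible since $k^2F^2=-1$ has no integer solutions, so every negative curve is primitive.
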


Interestingly enough, there is an unboundedness result for nef classes  in the context of the  SHGH Conjecture.
We recall the construction, which is an elegant application of number theory, from \cite{Cilib-et-al},
and we show that it gives examples of primitive classes of negative self-intersection
with unbounded semi-effective orders.

\begin{example}[Neither nef nor negative classes have bounded primitivity]\label{PellExample}\rm
Let $X'\deq X_{10}$. We will consider homogeneous divisor classes $D=(d;m^{10})$. The starting point of the construction is the Pell equation
\[
x^2 - 10y^2 \equ -1\ ,
\]	
where $x=2d+3$ and $y=2m+1$. For a natural number $k$, write $C_k\deq p_k/q_k$ for the $k$\textsuperscript{th} convergent of the simple continued fraction of $\sqrt{10}=[3;\overline{6}]$,
with $\gcd(p_k,q_k)=1$. One has $(p_0,q_0)=(3,1)$, and in general
\[
(3+\sqrt{10})^{k+1} \equ p_k + q_k\sqrt{10}\ \ \text{for all $k\in\N$.}
\]	
With this said, for all natural numbers $k$, set
\[
d_k \deq \frac{p_{2k}-3}{2}\ ,\ m_k \deq \frac{q_{2k}-1}{2}\ ,\ \text{ and }\ D_k \deq (d_k;m_k^{10})\ .
\]
The divisors $D_k$ are called \emph{Pell divisors} in \cite[Section 3]{Cilib-et-al}.
By the Divisibility Lemma \cite[Lemma 2]{Cilib-et-al}, for every $k\geq 0$ there exists a divisor $F_k$ on $X'$ such that $D_k=c_kF_k$ and
$c_k=p_{k-1}$,$F_k=(p_k;q_k^{10})$ for $k$ odd, while $c_k=q_{k-1}$,$F_k=(10q_k;p_k^{10})$ whenever $k$ is even. According to the proof of \cite[Proposition 4]{Cilib-et-al}, which assumes the SHGH Conjecture,
the primitivity of the nef divisor class is precisely $c_k$, but $c_k$ (being part of the solution to a Pell equation) can be an arbitrarily large positive number as $k\to \infty$.

Now let $X$ be the blow up of $X'$ at one point, away from the other points blown up.
We can regard the divisors on $X'$ as being a subgroup of those on $X$ in the obvious way.
Let $E$ be the exceptional divisor of this last blow up. Let $s_k>\sqrt{D_k^2}$ be an integer, so $(D_k+s_kE)^2 < 0$.
Then $D_k+s_kE$ is primitive and
has semi-effective order equal to that of $D_k$. Thus the classes $D_k+s_kE$ have unbounded semi-effective orders $c_k$
but $c_k(D_k+s_kE)$ is effective with negative self-intersection.
\end{example}

In contrast to the Bounded Negativity Conjecture, there are in general signs that positive divisors on surfaces tend not to obey strong boundedness conditions. A classic example of Koll\'ar, recalled from \cite[Example 1.5.7]{Laz1}, points out the lack of uniform very ampleness.

\begin{example}{\rm
We construct a smooth projective surface $Y$ and a sequence of ample line bundles $D_m$ on $X$ such
that $m\cdot D_m$ is \emph{not} very ample.
We start with an elliptic curve $C$ and consider the self-product $X\deq C\times C$. Then
a basis of the N\'eron--Severi space $N^1(X)_\R$ is given by the classes $f_1,f_2$ of the fibers of the natural projections,
and the diagonal $\delta$.  The nef and effective cones of $X$ are known (see \cite[Lemma 1.5.4]{Laz1} for instance), and we have
\[
\Nef(X) \equ \overline{\Eff}(X) \equ \{\alpha\in N^1(X)_\R\mid (\alpha^2)\geq0\, ,\, (\alpha\cdot (f_1+f_2))\geq 0 \}\ .
\]
For every integer $m\geq 2$, define the integral divisor
\[
A_m \deq m\cdot F_1 + (m^2-m_1)\cdot F_2 - (m-1)\cdot \Delta.
\]
Since $(A_m^2)=2>0$ and $(A_m\cdot(F_1+F_2))>0$, $A_m$ is ample for all $m\geq 2$. 	

Next, set $R\deq F_1+F_2$, let $B\in |2R|$ be a smooth divisor, and take $Y$ to be
the double cover $f\colon Y\to X$ branched along the divisor $B$. Take $D_m\deq f^*A_m$, then $D_m$ is ample,
but $m\cdot D_m$ is not very ample since it does not separate points of $Y$ sitting in fibers of $f$.
}
\end{example}


\section{Bounded volume denominators and bounded primitive negativity}

\begin{proof}[Proof of Theorem 1]
   The proof is based on the fact that for big $L$ with Zariski
   decomposition $L=P+N$, the volume is computed by
   \be
      \vol_X(L)=P^2
      \,.
   \ee
   We start by proving the implication (i) $\Rightarrow$ (ii):
   We assume that $X$ has unbounded
   primitive negativity. We will show the contrapositive, that $X$ has
   unbounded volume denominators.

   By assumption then, there
   exists a sequence
   $C_n$ of irreducible curves
   and primitive classes $F_n$
   on $X$ with
   $C_n\numequiv k_nF_n$ and
   $F_n^2\to-\infty$.
   Let $A$ be any ample line bundle on~$X$.
   For every integer $n>0$ we choose an integer $m_n$ bigger than the
   rational number
   \be
      \alpha_n := -\frac{AC_n}{C_n^2}
      \,.
   \ee
   Then the
   big (integral) divisor
   $L_n:=A+m_nC_n$ has
   Zariski decomposition
   \be
      L_n=(A+\alpha_nC_n)+(m_n-\alpha_n)C_n
   \ee
   and therefore its
   volume is given by
   \be
      \vol_X(L_n)= (A+\alpha_nC_n)^2
      =\Big(A^2-\frac{(AC_n)^2}{C_n^2}\Big)
      =\Big(A^2-\frac{(AF_n)^2}{F_n^2}\Big)
      \,.
   \ee
   The
   issue thus becomes to show that the denominators of
   the fractions $\frac{(AF_n)^2}{F_n^2}$
   are unbounded.
   We assert:
   \begin{itemize}
   \item[(*)]
      For every $n$, there exists an
      ample line bundle $A_n$ such that
      \be
         \gcd(F_n^2, A_nF_n) \mbox{ divides } \Delta
         \,,
      \ee
   \end{itemize}
   where $\Delta$ is the discriminant of $\NS(X)$, i.e.,
   the determinant of the intersection matrix of a basis of $\NS(X)$.
   Granting (*), the assertion follows, since
   the denominator in $\frac{(A_nF_n)^2}{F_n^2}$ is
   \be
      \frac{-F_n^2}{\gcd(F_n^2,(A_nF_n)^2)} \ge
      \frac{-F_n^2}{|\Delta|^2}\to\infty
      \,.
   \ee
   Finally, assertion (*) follows from
   Lemma~\ref{lemma:dicriminant} below
   plus an argument as in \cite[Lemma~2.4]{BPS}.

   We now prove the implication
   (ii) $\Rightarrow$ (i):
   Assume that $X$ has bounded primitive negativity,
   with bound $\bprim(X)$.
   Let $L$ be any big divisor on $X$,
   and consider its Zariski decomposition
   \be
      L=P+N=P+\sum_i a_iC_i
   \ee
   where the $C_i$ are irreducible curves on $X$.
   Writing $C_i=k_iF_i$ with integers $k_i$ and
   primitive classes $F_i$, we have
   \be
      L=P+\sum_i a_ik_iF_i
   \ee
   We will now generalize the argument from
   \cite[Sect.~1]{BPS} in order to see that the
   coefficients $a_ik_i$ have bounded denominators.
   (Note that this does not imply that the
   Zariski denominators, i.e., the denominators of the $a_i$,
   are necessarily bounded as well, because of the possible effect
   of multiplying $a_i$ by $k_i$.)

   The values of the products $a_ik_i$ are given as the unique solution
   of the system of linear equations
   \be
      L\cdot F_j = (P+\sum_i a_ik_i F_i)\cdot F_j = \sum_i a_ik_i F_iF_j
   \ee
   and therefore can be written as rational numbers with denominator
   $\det S$, where $S$ is the intersection matrix of
   the classes $F_i$. So we have shown so far that the
   denominators of the numbers $a_ik_i$ are bounded by
   $\det S$. Arguing now as in the proof of \cite[Theorem~2.2]{BPS},
   one finds that
   \be
      |\det S|\le \bprim(X)^{\rho(X)-1}
   \ee
   where $\rho(X)$ is the Picard number of $X$.
   Finally, since
   \be
      \vol(L)= P^2= (L-N)^2
      \,,
   \ee
   we see that the volume denominators are bounded by
   \be
      \dvol(X)=\bprim(X)^{2\rho(X)-2}
   \ee
\end{proof}

\begin{lemma}\label{lemma:dicriminant}
   Let $X$ be a smooth projective surface, and $F\in\NS(X)$ a
   primitive class. If an integer $t$ divides the intersection
   number $AF$ for all
   ample classes $A\in\NS(X)$, then $t$ divides the
   discriminant of $\NS(X)$.
\end{lemma}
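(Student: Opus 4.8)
The plan is to first upgrade the hypothesis so that it constrains the pairing of $F$ with \emph{every} class of $\NS(X)$, not merely with ample ones, and then to extract the divisibility $t\mid\Delta$ by an elementary integral linear-algebra argument.

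\emph{Step 1: ample classes generate $\NS(X)$ as a group.} Fix one ample class $A_0$. For an arbitrary $B\in\NS(X)$, the real class $A_0+\frac1n B$ tends to $A_0$ as $n\to\infty$, so, the ample cone being open, $nA_0+B$ is the class of an ample line bundle for $n\gg 0$. Hence $B=(nA_0+B)-nA_0$ is a $\Z$-linear combination of ample classes. Consequently the hypothesis ``$t\mid A\cdot F$ for every ample $A$'' upgrades, by $\Z$-linearity of the intersection pairing, to ``$t\mid B\cdot F$ for every $B\in\NS(X)$''.

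\emph{Step 2: linear algebra.} Choose a $\Z$-basis $e_1,\dots,e_\rho$ of $\NS(X)$, let $M=(e_i\cdot e_j)$ be its Gram matrix, so $\Delta=\det M$ (nonzero, by the Hodge index theorem), and write $F=\sum_i c_i e_i$; primitivity of $F$ means $\gcd(c_1,\dots,c_\rho)=1$. Step 1 says precisely that every entry of the integer vector $Mc$, with $c=(c_1,\dots,c_\rho)^{\mathsf T}$, is divisible by $t$, i.e.\ $Mc=tw$ for some $w\in\Z^\rho$. Multiplying by $\operatorname{adj}(M)$, which is again an integer matrix, and using $\operatorname{adj}(M)\,M=\Delta\,I$, one gets $\Delta c=t\,\operatorname{adj}(M)\,w$, so $t\mid \Delta c_i$ for every $i$. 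Picking integers $\lambda_i$ with $\sum_i\lambda_i c_i=1$ (possible since $F$ is primitive) then yields $\Delta=\sum_i\lambda_i(\Delta c_i)\in t\Z$, that is, $t\mid\Delta$. (Equivalently and more conceptually: Step 1 says that $\frac1t F$ lies in the dual lattice $\NS(X)^\vee\subset\NS(X)_\Q$, and primitivity of $F$ forces its class in $\NS(X)^\vee/\NS(X)$ to have order exactly $t$; since this group has order $|\Delta|$, we get $t\mid\Delta$.)

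\emph{Where the care is needed.} The conceptual content sits in Step 1; the only subtle point is in Step 2, where it is \emph{not} enough to note that $M$ has a nonzero kernel vector modulo $t$ and conclude $\det M\equiv 0\pmod t$ --- because $\Z/t\Z$ is not a field, that reasoning would only give $\gcd(\Delta,t)>1$. The adjugate identity (equivalently, the dual-lattice description) is exactly what delivers the sharp statement $t\mid\Delta$.
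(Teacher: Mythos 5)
Your proof is correct and follows essentially the same route as the paper's: upgrade the hypothesis from ample classes to all of $\NS(X)$, then apply the adjugate identity $\operatorname{adj}(M)\,M=\Delta\,I$ and use primitivity of the coordinate vector to conclude $t\mid\Delta$. Your Step 1 and the explicit B\'ezout argument merely fill in details the paper states without elaboration.
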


   Note that this is
   a more general version of \cite[Lemma~2.5]{BPS} -- the
   essential novelty is that the assumption of bounded Zariski
   denominators (which is not satisfied in our situation) can be
   replaced by the assumption that $F$ be primitive.

\begin{proof}
   If $t$ divides $AF$ for all
   ample line classes $A\in\NS(X)$, then in fact it divides
   $DF$ for all $D\in\NS(X)$. Thinking now of
   $\NS(X)/\mbox{Torsion}$ as $\Z^{\rho(X)}$, the assumption is
   that $t$ divides
   the number
   \be
      f^t S d \qquad\mbox{for all } d\in\Z^{\rho(X)}
      \,,
   \ee
   where $f\in\Z^{\rho(X)}$ is the coordinate vector of $F$, and $S$ is
   the matrix of the intersection form on $\NS(X)$.
   Using the adjugate matrix $S_{\rm adj}$, we get that $t$ divides
   \be
      f^t SS_{\rm adj}=f^t \det S
      \,,
   \ee
   and, as $f$ is primitive, this implies that $t$ divides
   $\det S$.
\end{proof}


\section{Primitivities of negative definite divisors}

   It has been known for a long time (see Example \ref{example:Sextic})
   that imprimitive prime divisors of negative self-intersection
   exist, but examples are rare. We do not know of an example with primitivity more than 2.

\begin{example}\label{example:Sextic}\rm
Here we mention the well known example of an irreducible plane sextic with 10 double points;
the double points can be taken to be nodes with real coordinates
(see \cite[p. 26, 51]{Hu}). Let $X$ be the surface obtained by blowing up the 10 double points.
Let $H$ be the class of a line and $E_i$ the exceptional divisor for the blow up of the $i$th point.
Then $M=3H-E_1-\dots-E_{10}$ is primitive but not effective, while $2M$ is the
proper transform of the sextic, hence irreducible. Thus $F$ has semi-effective order 2.

We note that $F$ is a fiber in a non-Jacobian elliptic fibration coming from a pencil of sextics,
the other generator being the twice the cubic through some choice of 9 of the 10 points blown up.
More generally we get additional examples by taking non-Jacobian rational elliptic surfaces $X'$
with a multiple fiber of multiplicity $m$ when there is a non-multiple fiber having a singular point
also of multiplicity $m$. Let $X$ be the blow up of $X'$ at the singular point with $E$ being the
exceptional curve.
Let $F$ be the multiple fiber (taken without multiplicity), then let $M=F-E$.
Then $m$ is the semi-effective order of $M$
and $mM$ is effective, contained in a fiber of the pencil $|mF|$.
It is possible for $m$ to be bigger than 2 (on a rational surface with a fiber of type $E_8$,
$m$ can be as big as 11), but thanks to the Kodaira classification of fibers of
elliptic fibrations, $m$ can't be bigger than 2 when $mF$ is a prime divisor.
See \cite{HL, HM} for background on non-Jacobian rational elliptic surfaces.
\end{example}

\begin{example}\label{example:LowPrimitivity2}\rm
The morphism $\PP^1\to \PP^2$ given by three general
homogeneous forms of degree $d$ in $K[\PP^2]$ has image
an irreducible curve $C'$ of degree $d$ with ${d-1}\choose{2}$ nodes.
Let $X$ be the surface obtained by blowing up $\PP^2$ at the nodes.
When $d\geq 6$ is even, the proper transform $C$ of $C'$ on $X$
has $C^2<0$, hence has primitivity 2. I.e., for this example we have
a primitive class $F$ of semi-effective order 2, and $2F$ is linearly equivalent to
a prime divisor of negative self-intersection.
\end{example}

We now give examples of primitive classes $F$ of semi-effective order 3 such that $3F$ is numerically equivalent to
a reduced (but not irreducible) curve $D$ with negative definite intersection matrix.

\begin{example}\label{example:LowPrimitivity3}\rm
Assume $K=\C$. Consider the $n^2+3$ points of intersection of the lines defined by the linear factors of
$H=(x^n-y^n)(x^n-z^n)(y^n-z^n)$ for $n\geq 3$, where $K[\PP^2]=K[x,y,z]$.
Let $X$ be the surface obtained by blowing up these $n^2+3$ points.
Let $D'$ be the curve defined by $H=0$, so $D'$ is the union of $3n$ lines, and $D'$
has multiplicity $n$ at 3 of the $n^2+3$ singular points of $D'$
(these 3 being the coordinate vertices of $\PP^2$) and multiplicity 3 at the remaining
singular points of $D'$.
Now let $D$ be the proper transform of $D'$ to $X$. Then, when 3 divides $n$, it is easy to see that
$D$ is $3F$ for a primitive class $F$ with semi-effective order 3, and $D$ has negative definite intersection matrix
(since the components of $D$ have negative self-intersection but are disjoint).
\end{example}

Example \ref{example:LowPrimitivity3} comes from a line arrangement
with no point where exactly two lines cross. Such examples are rare in
characteristic 0 (see \cite{BNAL} for the known examples, and see
\cite{BHRS} for curve arrangements using plane cubics which can be used in a similar way).
Examples are plentiful in positive characteristics,
and these lead to negative divisors where $kF$ is reduced and $k$ is arbitrarily large,
as we now show.

\begin{example}\label{example:LargePrimitivityPosChar}\rm
Let $K$ be an algebraically closed field of characteristic $p>0$.
Let $Q\subset K$ be a finite subfield, and let $q=|Q|$. There are $q^2+q+1$
$Q$-points in $\PP^2_K$ (i.e., points all of whose coordinates are in $Q$).
Let $P$ be one of them. Let $L_1,\ldots,L_r$
be all lines through pairs of $Q$-points which do not contain $P$
(there are $r=q^2$ such lines and $q^2+q$ such points). Let $D'$ be the union of these lines,
so $D'$ has degree $q^2$ and multiplicity $q$ at every $Q$-point other than $P$.
Blow up the the $Q$-points other than $P$ to obtain the surface $X$
and let $D$ be the proper transform of $C'$. Then
$D$ is $qF$ for a primitive class $F$ with semi-effective order $q$, and $D$ has negative definite self-intersection matrix
(since the components of $D$ have negative self-intersection but are disjoint).
\end{example}

We can construct examples with arbitrarily large semi-effective orders also in characteristic 0,
but the curves $D$ we obtain by our construction are not reduced or irreducible.

\begin{example}\label{example:LargePrimitivityChar0}\rm
Let $C'$ be a smooth plane curve of degree $d>1$. Pick distinct points $p_1,\ldots,p_r$ with $r\geq{d+1\choose2}$
such that there is no curve of degree less than $d$ containing the $r$ points. Let $p_{i,1}=p_i$ for each $i$,
let $p_{i,2}$ be the point of $C'$ infinitely near to $p_{i1}$, and iteratively for each $j>1$ let $p_{i,j+1}$ be the point of $C'$
infinitely near to $p_{i,j}$. Blow up the points $p_{i,1}$, then the points $p_{i,2}$, then $p_{i,3}$, etc.,
through $p_{i,j}$ for all $j\leq d$. Let the resulting surface be $X$. Let $E_{i,j}$ be the total transform to $X$ of the blow up
of $p_{i,j}$ and let $L$ be the total transform to $X$ of a general line in $\PP^2$. Then the linear equivalence divisor classes of
$L$ and $E_{i,j}$, $1\leq i\leq r$, $1\leq j\leq d$
give an integral orthogonal basis for the divisor class group of $X$, with respect to which
the proper transform $C$ of $C'$ is linearly equivalent to $dL-\sum_{i,j}E_{i,j}$.
Note also that there are prime divisors $N_{i,j}\sim E_{i,j}-E_{i,j+1}$ for all $1\leq i\leq r$ and $1\leq j < d$.
Let $D=C+\sum_i (N_{i,1}+2N_{i,2}+\cdots+(d-1)N_{i,d-1})\sim d(L-\sum_i E_{i,d})$.
Note that $F=L-\sum_i E_{i,d}$ is primitive and $dF$ is linearly equivalent to the effective divisor $D$.
If $bF$ were linearly equivalent to an effective divisor $B$ for some $b<d$, then the image $B'$ of $B$
in $\PP^2$ would be a curve of degree $b$ containing the points $p_1,\ldots, p_r$, contrary to our choice of the points $p_i$.
Thus $F$ has semi-effective order $d$. Finally, note that $C\sim dL-\sum_i (E_{i,1}+\cdots+E_{i,d})$, so
$N_{i,j}\cdot C=0$ for all $i$ and $j$.

Let $M=(A_i\cdot A_j)$ be the intersection matrix for $D$, so $A_i$ runs over the $r(d-1)+1$ components of $D$.
We have $C^2=d^2-rd\leq d^2-d^2(d+1)/2<0$.
Since $C\cdot N_{i,j}=0$ for all $i,j$ and $N_{i',j}\cdot N_{i,j}=0$ for all $j$ and all $i'\neq i$, $M$
can be taken to be a block diagonal matrix where one block is $M_0=(C^2)$, and there are $r$ blocks
consisting of submatrices $M_i=(N_{i,j}\cdot N_{i,j'})$, $1\leq i\leq r$.
The block $M_i$ is a $(d-1)\times (d-1)$ matrix with each diagonal entry being $-2$
and each entry to the immediate right or left of a diagonal entry being a 1, so
$$M_i=\left(\begin{array} {rrrrrrrrr}
-2 & 1 & 0 & 0 & \cdots & 0 & 0 & 0 & 0 \\
1 & -2 & 1 & 0 & \cdots & 0 & 0 & 0 & 0 \\
\vdots &  \vdots &  \vdots &  \vdots &   &  \vdots & \vdots & \vdots & \vdots \\
0 & 0 & 0 & 0 & \cdots & 0 & 1 & -2 & 1 \\
0 & 0 & 0 & 0 & \cdots & 0 & 0 & 1 & -2 \\
\end{array}
\right)
$$
Since the span $V_i$ of $N_{i,1},\ldots, N_{i,d-1}$ is a negative definite subspace of the divisor class group,
the matrix $M_i$ is also negative definite. (To see that $V_i$ is negative definite, note that
it has a rational orthogonal basis given by $E_{i,1}-E_{i,2}, E_{i,1}+E_{i,2}-2E_{i,3}, E_{i,1}+E_{i,2}+E_{i,3}-3E_{i,4},\ldots, E_{i,1}+\cdots+E_{i,d-2}-(d-2)E_{i,d-1}$,
and each of these classes has negative self-intersection.)
\end{example}

\begin{proof}[Proof of Theorem~\ref{introthm:divisors}]
Examples \ref{example:LargePrimitivityPosChar} and \ref{example:LargePrimitivityChar0} verify
the claim of the theorem.
\end{proof}



\newcommand\address[1]{\par\bigskip{\let\\=, #1}}
\newcommand\email[1]{\par\nopagebreak\textit{E-Mail address: }\texttt{#1}}
\footnotesize

\address{
   Thomas Bauer,
   Fachbereich Mathematik und Informatik,
   Philipps-Universit\"at Marburg,
   Hans-Meer\-wein-Stra\ss e,
   D-35032 Marburg, Germany.
}
\email{tbauer@mathematik.uni-marburg.de}

\address{Brian Harbourne,
   Department of Mathematics,
   University of Nebraska,
   Lincoln, NE 68588-0130 USA
}
\email{bharbourne1@unl.edu}

\address{
   Alex K\"uronya,
   Institut f\"ur Mathematik,
   Goethe-Universit\"at Frankfurt,
   Robert-Mayer-Str. 6-10.
   D-60325 Frankfurt am Main, Germany.
}
\email{kuronya@math.uni-frankfurt.de}

\address{
   Matthias Nickel,
   Institut f\"ur Mathematik,
   Goethe-Universit\"at Frankfurt,
   Robert-Mayer-Str. 6-10.
   D-60325 Frankfurt am Main, Germany.
   }

\email{nickel@math.uni-frankfurt.de}


\end{document}